\DeclareMathOperator{\Hom}{Hom}
\DeclareMathOperator{\Pic}{Pic}
\DeclareMathOperator{\SPEC}{{\bf Spec}}
\DeclareMathOperator{\Spec}{Spec}
\DeclareMathOperator{\val}{val}
\DeclareMathOperator{\TV}{TV}
\DeclareMathOperator{\SL}{SL}
\DeclareMathOperator{\prin}{prin}
\DeclareMathOperator{\ft}{ft}
\DeclareMathOperator{\coker}{coker}
\DeclareMathOperator{\uf}{uf}
\DeclareMathOperator{\UT}{UT}
\DeclareMathOperator{\Cox}{Cox}
\DeclareMathOperator{\Div}{Div}
\let\bb=\mathbb
\let\rar=\rightarrow
\let\f=\mathfrak
\let\s=\mathcal
\let\wt=\widetilde
\def\risom{\buildrel\sim\over{\smashedlongrightarrow}}
 \def\smashedlongrightarrow{\setbox0=\hbox{$\longrightarrow$}\ht0=1.25pt\box0}
\newcommand {\kk} {\Bbbk}
\theoremstyle{plain}% default
 \newtheorem{thm}{Theorem}[section]
   \newtheorem{cor}[thm]{Corollary}
\theoremstyle{definition}
 \newtheorem{eg}[thm]{Example}
\theoremstyle{remark} 
 \newtheorem{rmk}[thm]{Remark}
  \newtheorem{ass}[thm]{Assumption}
\title[Cluster algebras are Cox rings]{Cluster algebras are Cox rings}
\author{Travis Mandel}
\address{University of Utah\\
Department of Mathematics\\
155 S 1400 E RM 233\\
Salt Lake City, UT, 84112-0090}
\email{mandel{\char'100}math.utah.edu}
\thanks{The author was supported by the National Science Foundation RTG Grant DMS-1246989.}
\begin{document}

\begin{abstract}
It was recently shown by Gross, Hacking, and Keel that, in the absence of frozen indices, a cluster $\s{A}$-variety with generic coefficients is the universal torsor of the corresponding cluster $\s{X}$-variety with corresponding coefficients. We extend this to allow for frozen vectors and corresponding partial compactifications of the $\s{A}$- and $\s{X}$-spaces.  When certain assumptions are satisfied, we conclude that the theta bases of Gross-Hacking-Keel-Kontsevich give bases of global sections for every line bundle on the leaves of the partially compactified $\s{X}$-space.  We note that our arguments work without assuming that the exchange matrix is skew-symmetrizable. 
\end{abstract}

\maketitle

\setcounter{tocdepth}{1}
\tableofcontents

\section{Introduction}

In \cite{FG1}, Fock and Goncharov describe a geometric approach to the study of cluster algebras by defining cluster varieties, denoted $\s{A}$ and $\s{X}$. In \cite{GHK3}, Gross, Hacking, and Keel applied the techniques of birational geometry to the study of cluster varieties to prove a number of powerful results.  One of the more beautiful such results is that, in the absence of frozen vectors, a cluster variety $\s{A}_t$ with generic coefficients $t$ is the universal torsor over the corresponding leaf $\s{X}_{\phi}\subset \s{X}$.  Roughly, this means that $\s{A}_t=\bigoplus_{\s{L}\in \Pic(\s{X}_{\phi})} \s{L}$ (see \S \ref{MainThmSection} for more precise statements).  In particular, this means that the upper cluster algebra with generic coefficients $\Gamma(\s{A}_t,\s{O}_{\s{A}_t})$ is the Cox ring of $\s{X}_{\phi}$.

The main results of this article (Theorems \ref{Pic}, \ref{R}, and \ref{UTor}, and Corollary \ref{CoxCor}) extend the Gross-Hacking-Keel results to allow for frozen vectors and the corresponding partial compactifications of the $\s{A}$- and $\s{X}$-spaces.  Further partial compactifications are handled in Theorem \ref{SigmaThm}.  Such compactifications are important for many geometric and representation-theoretic applications of cluster theory, e.g., those described in \cite[0.4]{GHKK}.

As a consequence of Theorem \ref{R}, we show that under Assumption \ref{FullFG}, every line bundle (up to isomorphism) on our partially compactified $\s{X}$-spaces admits a basis of global sections consisting of certain theta functions from \cite{GHKK}.  See Theorem \ref{ThetaBasis}.  In the language of \cite{GHKK}, Assumption \ref{FullFG} says that the middle and upper cluster algebras of a certain compactification of $\s{A}_{\prin}$ are equal.

As a bonus, we note that all our arguments work without assuming that the exchange matrix from the seed data is skew-symmetrizabe.  The much weaker condition that the non-frozen diagonal entries are $0$ is sufficient.

We note the following previously-known and very important special cases of our results:

\begin{eg}
 The set of spaces $\s{X}_{e}^{\Sigma}$ which can be constructed from a seed with no non-frozen vectors (i.e., $I_{\uf}=\emptyset$) coincides with the set of toric varieties.  In these cases, the realization of $\Gamma(\s{A}^{\Sigma},\s{O}_{\s{A}^{\Sigma}})$ as the Cox ring of $\s{X}_e^{\Sigma}$ agrees with the homogeneous coordinate ring construction of \cite{Cox}, and the theta functions discussed in \S \ref{ThetaLineBundles} are just monomials.
 \end{eg}
 
 \begin{eg}\label{BasicAffine}
 Let $G$ be a semisimple Lie group, $B\subset G$ a Borel subgroup, and $N$ the unipotent radical of $B$.  Then the basic affine space $G/N$ admits a cluster ($\s{A}$-variety) structure via \cite[\S 2.6]{BFZ}, and $G/B$ is the corresponding $\s{X}$-space leaf.  Corollary \ref{CoxCor} in this case is the well-known statement that the coordinate ring of $G/N$ is the homogeneous coordinate ring of $G/B$.  
 
 In the case that $G=\SL_r$, \cite{Magee} shows that Assumption \ref{FullFG} does indeed hold.  Magee concludes that, for a given weight $\lambda$, the integer points of a certain polytope (what we call $\Xi_{\lambda}$) parameterize a theta function basis for the corresponding representation $V_{\lambda}$ of $\SL_r$.  Interpreting $V_{\lambda}$ as a line bundle on $G/B$ gives the conclusion of Theorem \ref{ThetaBasis} applied to this situation.
\end{eg}

\subsection{Acknowledgements}
The author would like to thank Man-Wai Cheung for helpful discussions and for suggesting the project that motivated this work, as well as Sean Keel for helpful feedback.

\section{Construction of cluster varieties}

\subsection{Seeds}

A seed $S$ is data of the form
\begin{align}\label{seed}
S:=(N,I,E:=\{e_i\}_{i\in I},F\subset I,[\cdot,\cdot]),
\end{align}
where $N$ is a lattice of finite rank $n$, $I$ is an index set with $|I|\leq n$, $E$ is a basis for a saturated sublattice $N_I\subseteq N$, $F$ is a subset of $I$, and $[\cdot,\cdot]$ is a $\bb{Z}$-valued bilinear form on $N$ such that $[e_i,e_i]=0$ for all $i\in I_{\uf}:=I\setminus F$.  If $i\in F$, we say $e_i$ is {\bf frozen}.  Let $N_{\uf}\subset N$ denote the span of $\{e_i\}_{i\in I_{\uf}}$. 

Let $M:=N^*=\Hom(N,\bb{Z})$, and let $M_I:=N_I^*$.  Let $\{e_i^*\}_{i\in I}\subset M_I$ denote the dual basis to $E$.  We have two maps $p_1, p_2:N\rar M$ given by $n\mapsto [n,\cdot]$ and $n\mapsto [\cdot,n]$, respectively.  For $i=1,2$, let $\?{p}_i$ denote the composition of $p_i$ with the projection to $M_I$ (i.e., restriction to $N_I$).  Let $K_i:=\ker p_i$, and denote the inclusion $\kappa_i:K_i\hookrightarrow N$.   Let $\langle\cdot,\cdot\rangle:N\times M \rar \bb{Z}$ denote the dual pairing between $N$ and $M$. 

We call $S$ {\bf skew} if $[\cdot,\cdot]$ is skew-symmetrizable, i.e., if there exist positive rational numbers $\{d_j\}_{j\in I_{\uf}}$ and a skew-symmetric form $\{\cdot,\cdot\}$ on $N_{\uf}$ such that $[e_i,e_j]=d_j\{e_i,e_j\}$ for each $i,j\in I_{\uf}$.  Most papers on cluster algebras assume $S$ is skew, but we will not need this.  See \S \ref{FGVersion} for more on the skew cases.

\begin{ass}\label{PrimDistinct}
Unless otherwise stated, we make the following assumptions throughout the paper:
\begin{enumerate}
    \item $p_2(e_i)\neq 0$ for any $e_i\in E$.
    \item $p_2(e_i)$ is primitive in $M$ for each $i\in F$.
\end{enumerate}
\end{ass}

We note that a seed failing Assumption \ref{PrimDistinct}(1) can easily be modified to satisfy that assumption by simply removing from $E$ any $e_i$ in the kernel of $p_2$, and this change would not affect the cluster varieties we construct.

\subsection{Mutations and the construction of cluster varieties}\label{mutaskew}
Given a lattice $L$ with dual lattice $L^*$, let $T_L:=L\otimes \kk^* = \Spec \kk[L^*]$.  For $u\in L$, let $T_{L,u}$ denote the partial compactification $\Spec \kk[v\in L^*|\langle u,v\rangle\geq 0]$ of $T_L$.  We let $D_u$ denote the compactifying divisor $T_{L,u}\setminus T_L$ (or its closure if we further compactify).  A choice of $u\in L$ and $\psi\in L^*$ satisfying $\psi(u)=0$ determines a birational map $\mu_{u,\psi}: T_L \dashrightarrow T_L$ defined by
\begin{align*}
\mu_{u,\psi}^{\sharp}(z^\varphi) :=z^{\varphi}(1+z^{\psi})^{-\varphi(u)} \mbox{~\hspace{.25 in} for $\varphi\in L^*$}.
\end{align*}
$\mu_{u,\psi}$ is called a {\bf mutation}.  See Figure \ref{mutation-fig} for a geometric interpretation of mutation due to \cite{GHK3}.

For a seed $S$, we have a so-called $\s{A}$-torus $A_S:=T_N=\Spec \kk[M]$ and an $\s{X}$-torus  $X_S:=T_M = \Spec \kk[N]$.  For each $j\in I_{\uf}$, define $\mu_{j}^{\s{A}}:A_{S}\dashrightarrow A_{S,j}:=T_N$ and $\mu_{j}^{\s{X}}:X_{S}\dashrightarrow X_{S,j}:=T_M$ by
\begin{align*}
    (\mu_{j}^{\s{A}})^{\sharp}:=\mu_{e_j,p_1(e_j)}^{\sharp}: z^m\mapsto z^m(1+z^{p_1(e_j)})^{-\langle e_j,m\rangle}
\end{align*} 
and
\begin{align*}
    (\mu_{j}^{\s{X}})^{\sharp}:=\mu_{p_2(e_j),e_j}^{\sharp}:z^n\mapsto z^n(1+z^{e_j})^{-[n,e_j]}.
\end{align*}
These are the $\s{A}$- and $\s{X}$-{\bf mutations}, respectively.   For each $j \in F$, we denote partial compactifications $A_{S,j}:= T_{N,e_j}$ and $X_{S,j}:=T_{M,p_2(e_j)}$.   We define what we call {\bf frozen mutations} as follows: for each $j\in F$, $\mu_{j}^{\s{A}}$ and $\mu_{j}^{\s{X}}$ are simply inclusions $A_S\hookrightarrow A_{S,j}$ and $X_S\hookrightarrow X_{S,j}$, respectively.

Given a seed $S$, we define the {\bf cluster $\s{A}$-variety} $\s{A}^{\star}$ (or $\s{A}_S^{\star}$ if $S$ is not clear from context) to be the scheme obtained by gluing $A_S$ to $A_{S,j}$ via $\mu_j^{\s{A}}$ (restricted to the maximal Zariski open subsets on which it is biregular) for each $j\in I$ (including each $j\in F$ using our frozen mutations above).  $A_{S,j}$ and $A_{S,i}$ are similarly glued via $\mu_{i}^{\s{A}}\circ (\mu_j^{\s{A}})^{-1}$ for each $i,j\in I$.

We similarly define the cluster $\s{X}$-variety $\s{X}^{\star}$ (or $\s{X}_S^{\star}$ if $S$ is not clear from context) by gluing $X_S$ to $X_{S,j}$ via $\mu_j^{\s{X}}$ for each $j\in I$.  For each $i,j\in I$, $X_{S,j}$ and $X_{S,i}$ are glued only along the locus on which $\mu_{i}^{\s{X}}\circ (\mu_j^{\s{X}})^{-1}|_{\s{X}_{S,j}\cap \s{X}_S}$ is biregular, even if $\mu_{i}^{\s{X}}\circ (\mu_j^{\s{X}})^{-1}$ is biregular on additional points of $\s{X}_{S,j}$.  As a result, $\s{X}^{\star}$ may be non-separated.

We will write $\s{A}^{\circ}$ and $\s{X}^{\circ}$ to denote the spaces constructed like $\s{A}^{\star}$ and $\s{X}^{\star}$ as above but without applying the mutations for $j\in F$.  I.e., they are the spaces $\s{A}^{\star}$ and $\s{X}^{\star}$, respectively, minus the boundary divisors corresponding to the frozen indices.

The {\bf $\star$-upper cluster algebra} is the ring $\Gamma(\s{A}^{\star},\s{O}_{\s{A}^{\star}})$ of all global regular functions on $\s{A}$.  See \S\ref{FGVersion} for a discussion of how $\s{A}^{\star}$, $\s{X}^{\star}$, and $\Gamma(\s{A}^{\star},\s{O}_{\s{A}^{\star}})$ relate to the usual cluster varieties and upper cluster algebra $\s{A}$, $\s{X}$, and $\Gamma(\s{A},\s{O}_{\s{A}})$, respectively, when $S$ is skew.

\begin{figure}
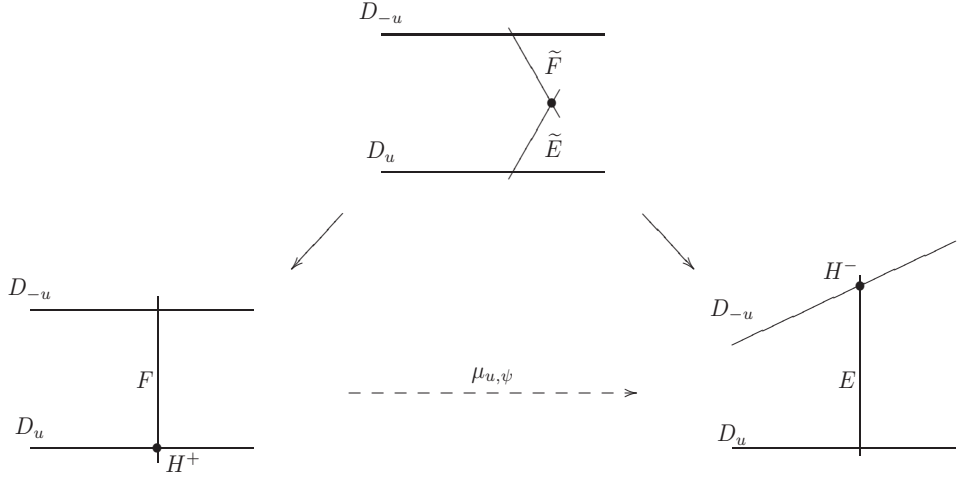

\center{\resizebox{5 in}{1.25 in}{
\xy
(-60,-30)*{}; (-25,-30)*{} **\dir{-}="Dun1";
(-60,-10)*{}; (-25,-10)*{} **\dir{-}="Dup1";
(-40,-8)*{}; (-40,-32)*{} **\dir{-}="F1";
(-60,-27)*{D_{u}};
(-60,-7)*{D_{-u}};
(-42,-20)*{F};
(-5,10)*{}; (30,10)*{} **\dir{-}="Dun2";
(-5,30)*{}; (30,30)*{} **\dir{-}="Dup2";
(15,31)*{}; (23,18)*{} **\dir{-}="F2";
(15,9)*{}; (23,22)*{} **\dir{-}="E2";
(-5,13)*{D_{u}};
(-5,33)*{D_{-u}};
(22,26)*{\wt{F}};
(22,14)*{\wt{E}};
(50,-30)*{}; (85,-30)*{} **\dir{-}="Dun3";
(50,-15)*{}; (85,0)*{} **\dir{-}="Dup3";
(70,-5)*{}; (70,-31)*{} **\dir{-}="E3";
(50,-28)*{D_{u}};
(50,-10)*{D_{-u}};
(68,-20)*{E};
{\ar (-11,4);(-19,-4)};
{\ar (36,4);(44,-4)};
(-40.1,-30)*{\bullet};
(-36,-32)*{H^+};
(21.7,20)*{\bullet};
%(19,20)*{p};
(70,-6.5)*{\bullet};
(67,-4)*{H^-};
{\ar@{-->}  (-10,-22);(35,-22)};
(12.5,-19.5)*{\mu_{u,\psi}}
\endxy
}}
\caption{Let $u'\in L$ denote the primitive vector in the direction $u$, and let $|u|$ denote the index of $u$, so $u=|u|u'$.  Let $\Sigma$ denote the fan in $L\otimes \bb{Q}$ with rays generated by $u$ and $-u$.  The map $L\rar L/\bb{Z}u'$ induces a $\bb{P}^1$ fibration of the toric variety $\TV(\Sigma)$ over $T_{L/\bb{Z}u'}$. The mutation $\mu_{u,\psi}$ is the birational map $T_L\dashrightarrow T_L$ given geometrically by including $T_L$ into $\TV(\Sigma)$, blowing up the locus $H^+:=D_{u}\cap \?{V}((1+z^{\psi})^{|u|})$ (left arrow),  contracting the proper transform $\wt{F}$ of the fibers $F$ which hit $H^+$ down to a hypertorus $H^-$ in $D_{-u}$ (right arrow), and then taking the complement of the proper transforms of the boundary divisors.  In the figure, $\wt{E}$ denotes the exceptional divisor, with $E$ being its image after the contraction of $\wt{F}$.  The result of gluing the two copies of $T_L$ via $\mu_{u,\psi}$ is given by the top picture minus $D_u$, $D_{-u}$, and $\wt{E}\cap\wt{F}$.}\label{mutation-fig}
\end{figure}

\subsection{An exact sequence of cluster varieties}

Observe that for each seed $S$, there is a {\it not necessarily exact}  sequence 
\begin{align*}
	0 \rar K_2 \stackrel{\kappa_2}{\rar} N \stackrel{p_2}{\rar} M \stackrel{\lambda}{\rar} K_1^* \rar 0.
\end{align*}
Here, $\lambda:M\rar K_1^*$ is the dual to the inclusion $\kappa_1$.  Tensoring with $\kk^*$ yields an exact sequence
\begin{align*}
	1 \rar T_{K_2} \stackrel{\kappa_2}{\rar} T_N \stackrel{p_2}{\rar} T_M \stackrel{\lambda}{\rar} T_{K_1^*} \rar 1.
\end{align*}
In terms of functions, these maps are given by $\kappa_2^{\sharp}(z^m)=z^{m|_{K_2}}$, $p_2^{\sharp}(z^n)=z^{p_1(n)}$, and $\lambda^{\sharp}(z^{k})=z^{\kappa_1(k)}$.  With these descriptions, it is straightforward to check that the sequence commutes with mutations, including extending over the boundary divisors from our frozen mutations (cf. \cite[Lem. 2.10]{FG1} for the skew cases without the boundary divisors).  Thus, one obtains an exact sequence 
\begin{align}\label{CES}
	1\rar T_{K_2} \stackrel{\kappa_2}{\rar} \s{A}^{\star} \stackrel{p_2}{\rar} \s{X}^{\star} \stackrel{\lambda}{\rar} T_{K_1^*} \rar 1.
\end{align}
  We note that we will repeatedly abuse notation as above, using the same notation to denote maps of lattices, the induced maps of tori, and also the induced maps of cluster varieties. 

For $\phi\in T_{K_1^*}$, let $\s{X}^{\star}_{\phi}$ denote the fiber of $\lambda$ over $\phi$.  In particular, $p_2(\s{A}^{\star})=\s{X}^{\star}_e$ where $e$ denotes the identity in $T_{K_1^*}$.

\subsection{Principal coefficients}\label{prin}

We will also need {\bf  cluster varieties with principal coefficients}.  For $S$ as in \eqref{seed}, define 
\[
S_{\prin}:=(N_{\prin},I,E_{\prin},F,[\cdot,\cdot]_{\prin})
\]
by
\begin{itemize}[noitemsep]
\item $N_{\prin}=N_I\oplus M$, 
\item $I$ and $F$ are the same as for $S$,
\item $E_{\prin}:=(E,0)$, by which we mean $\{(e_i,0)|i\in I\}$.
\item $[(n_1,m_1), (n_2,m_2)]_{\prin} : = [n_1,n_2] + \langle n_1,m_2\rangle - \langle n_2,m_1\rangle$.  Here, $\langle\cdot,\cdot\rangle$ is the restriction of the dual pairing on $N\oplus M$ to $N_I\oplus M$.
\end{itemize}
Then $\s{A}^{\star}_{\prin}$ and $\s{X}^{\star}_{\prin}$ are simply the spaces $\s{A}_{S_{\prin}}^{\star}$ and $\s{X}_{S_{\prin}}^{\star}$ constructed with respect to the seed $S_{\prin}$.  Let $p_{1,\prin}$ and $p_{2,\prin}$ denote the corresponding $p_1$ and $p_2$.  For future use, we note that $p_{1,\prin}((n,0))=(\?{p}_1(n),n)$, so 
\begin{align}\label{muAprin}
    (\mu_j^{\s{A}_{\prin}})^{\sharp}:z^{(m,n)} \mapsto z^{(m,n)}(1+z^{(\?{p}_1(e_j),e_j)})^{-\langle e_j,m\rangle}.
\end{align}
In particular, $\mu_j^{\s{A}_{\prin}}$ commutes with the grading 
\begin{align}\label{degprin}
    \deg(z^{(m,n)}) := m-\?{p}_1(n)\in M_I.
\end{align} Hence, $\deg$ induces an $M_I$-grading on $\s{O}_{\s{A}^{\star}_{\prin}}$.  We note that $\deg$ is denoted $w$ (for weight) in \cite{GHKK} (cf. their Prop. 7.7), as $z^{q}$ is a weight $\deg(z^q)$ eigenfunction for the action of $T_{N_I}$ on $\s{A}_{\prin}^{\star}$.

Define $\pi:N_{\prin}\rar M$, $(n,m)\mapsto m$.  Also, define $\wt{p}_2:N_{\prin}\rar M$, $(n,m)\mapsto p_2(n)+m$. 
 Let $\lambda$ and $i$ both denote the dual to $\kappa_1$.  We have a commutative diagram:
\begin{align*}
    \xymatrix{
N_{\prin} \ar[d]_{\pi}  \ar[r]^{\wt{p}_2} &M \ar[d]^{\lambda}\\
M  \ar[r]_{i} & K_1^* }
\end{align*}
Tensoring with $\kk^*$ yields a commutative diagram of maps of tori.  In terms of functions, $\pi^{\sharp}(z^n)=z^{(0,n)}$, $\wt{p}_2^{\sharp}(z^n)=z^{(\?{p}_1(n),n)}$, and $i^{\sharp}$ and $\lambda^{\sharp}$ are both given by $\lambda^{\sharp}(z^{k})=z^{\kappa_1(k)}$.  One now checks that the top row commutes with mutations (including extending over the boundary divisors from our frozen mutations), and so we obtain a commutative diagram:
\begin{align}\label{AprinCommutes}
    \xymatrix{
\s{A}^{\star}_{\prin} \ar[d]_{\pi}  \ar[r]^{\wt{p}_2} &\s{X}^{\star} \ar[d]^{\lambda}\\
T_M  \ar[r]_{i} & T_{K_1^*} }
\end{align}
For each $\phi \in T_{K_1^*}$ and $t\in i^{-1}(\phi)$, let $\s{A}_{\phi}^{\star}$ denote the fiber of $i\circ \pi$ over $\phi$, and let $\s{A}_t^{\star}$ denote the fiber of $\pi$ over $t$.  Then $\wt{p}_2$ restricts to a map $p_{2,\phi}:\s{A}^{\star}_{\phi}\rar \s{X}^{\star}_{i(t)}$ and further restricts to $p_{2,t}:\s{A}^{\star}_t\rar \s{X}^{\star}_{i(t)}$.  We note for future reference that $\s{A}_t^{\star}$ is the subscheme of $\s{A}^{\star}_{\prin}$ corresponding to the ideal sheaf
\begin{align}\label{It}
\s{I}_t:=\langle z^{(m,n+n_0)}-z^{(m,n)}z^{n_0}(t)|m\in M_I, n,n_0\in N\rangle.
\end{align}
Here, $z^{n_0}(t)$ means the evaluation of the monomial $z^{n_0}$ at the point $t\in T_M$.  Similarly, $\s{X}_{\phi}^{\star}$ is the subscheme of $\s{X}^{\star}$ corresponding to
\begin{align}\label{Iphi}
\s{I}_{\phi}:=\langle z^{n+k}-z^nz^{k}(\phi)| n\in N,k\in K_1\subset N\rangle.
\end{align}

\subsection{Further compactifications}\label{compact}

Let us assume that $p_2(e_i)\neq p_2(e_j)$ for distinct $i,j\in F$.  Let $\Sigma$ be a fan in $M_{\bb{R}}$ whose set of rays $\Sigma^{[1]}$ is $\{\bb{R}_{\geq 0} p_2(e_i): i\in F\}$.  For a cone $\sigma\in \Sigma$ whose bounding rays are generated by $p_2(e_{i_1}),\ldots,p_2(e_{i_k})$, $i_j\in F$, let $\wt{\sigma}$ denote the cone in $N_{\bb{R}}$ whose bounding rays are generated by $e_{i_1},\ldots,e_{i_k}$.  Let $\wt{\Sigma}$ be the fan $\{\wt{\sigma}|\sigma \in \Sigma\}$ in $N_{\bb{R}}$.

Now, in the construction of $\s{X}^{\star}$, in addition to gluing $X_S=T_M$ to $X_{S,i}$ for each $i\in I$, we can also attach $X_{S,\sigma}:=\Spec \kk[\sigma^{\vee}]$ for each $\sigma\in \Sigma$.  For example, in the case where $\sigma$ is the ray generated by $p_2(e_i)$, $i\in F$, $X_{S,\sigma}$ is the same as $X_{S,i}$.  We denote the resulting scheme by $\s{X}^{\Sigma}$.

We analogously define $\s{A}^{\Sigma}$, but using $\wt{\Sigma}$ in place of $\Sigma$.  Similarly for $\s{A}^{\Sigma}_{\prin}$ using the natural inclusion $N\hookrightarrow N\oplus M$ to view $\wt{\Sigma}$ as a fan in $N_{\prin}$.

We note that the fibration $\lambda$ of $\s{X}^{\star}$ over $T_{K_1^*}$ extends to $\s{X}^{\Sigma}$.  Similarly, the fibrations $\pi$ and $i\circ \pi$ of $\s{A}^{\star}_{\prin}$ over $T_M$ and $T_{K_1^*}$, respectively, extend over $\s{A}_{\prin}^{\Sigma}$.  We thus obtain fibers $\s{X}^{\Sigma}_{\phi}$, $\s{A}^{\Sigma}_{t}$, and $\s{A}^{\Sigma}_{\phi}$.

\subsection{Relation to the usual notion of cluster varieties}\label{FGVersion}
If $S$ is skew, then for each $j\in I_{\uf}$, we can define a new skew seed $\mu_j(S)$ given by the same data as $S$ except for $E$, which is replaced with $E':=\{\mu_j(e_i)\}_{i\in I}$, where \begin{align*}
\mu_j(e_i):=\begin{cases}
e_i+\max([e_i,e_j],0) e_j &\mbox{~if $i\neq j$} \\
-e_i &\mbox{~if $i= j$.}
\end{cases}
\end{align*}
One can mutate again with respect to any $j\in I_{\uf}$, and in this way one obtains an infinite oriented rooted tree $\f{T}$ of skew seeds, with $|I_{\uf}|$ outgoing edges (labelled by the elements of $I_{\uf}$) at each vertex corresponding to the different possible mutations of the corresponding seed.\footnote{Skew-ness of $S$ ensures that repeated mutation preserves the assumption that $[e_i,e_i]=0$ for all $i\in I_{\uf}$.}  The cluster $\s{A}$-variety $\s{A}$ of \cite{FG1} (see also \cite[\S 2]{GHK3}) is then constructed by gluing the $\s{A}$-tori corresponding to the different vertices together using the compositions of the $\s{A}$-mutations associated to the paths between the vertices.  Similarly for the cluster $\s{X}$-variety $\s{X}$, using the $\s{X}$-tori and $\s{X}$-mutations.  The superscript $\ft$ is added if one restricts to a finite connected regular subtree which includes $S$ and the adjacent seeds.

Our boundary divisors associated to the frozen vectors may be added by applying all the gluings associated to the frozen mutations for each $j\in F$ at any vertex of $\f{T}$.  More generally, we could glue boundary strata corresponding to cones in a fan as in \S \ref{compact}.  The case where $\wt{\Sigma}$ includes the cone generated by $\{e_i|i\in F\}$ and all its subcones agrees with the treatment of frozen vectors in \cite[Construction 2.10]{GHK3}.

Our $\s{A}^{\star}$ and $\s{X}^{\star}$ in the skew cases are clearly subschemes of this $\s{A}$ and $\s{X}$.  Without the frozen mutations, they are the minimal cases of $\s{A}^{\ft}$ and $\s{X}^{\ft}$ from \cite[Rmk. 2.6]{GHK3}.  In fact, in the absence of frozen vectors, our $\s{X}^{\star}$ is what \cite[\S 4]{GHK3} denotes as $X$, and similarly, our $\s{A}_{\prin}^{\star}$ and $\s{A}_t^{\star}$ agree with their $A_{\prin}$ and $A_t$, respectively.

\cite[Theorem 3.9]{GHK3} says that $\s{X}^{\star}$ actually covers all but a codimension $2$ subset of any $\s{X}^{\ft}$.  If $t\in T_{M}$ is very general,\footnote{One says that $t$ is very general if it is in the complement of some countable union of codimension $1$ subsets.} the same theorem says that the analogous statement holds for $\s{X}^{\star}_t$ with $\s{X}_t^{\ft}$ and also for $\s{A}^{\star}_t$ with $\s{A}_t^{\ft}$.  Similarly for $\s{A}^{\star}$ and $\s{A}^{\ft}$ if $S$ is totally coprime.\footnote{A skew seed $S$ is called coprime if $p_2(e_i)\neq rp_2(e_j)$ for any distinct $i,j\in I_{\uf}$ and $r\in \bb{Q}_{>0}$.  $S$ is called totally coprime if every seed mutation equivalent to $S$ is coprime.  In particular, $S$ is totally coprime whenever $[\cdot,\cdot]$ is non-degenerate.}  This agreement up to codimension $2$ ensures that spaces of global sections of line bundles on the pairs of spaces agree.  In particular, our $\star$-upper cluster algebra agrees with the usual upper cluster algebra $\Gamma(\s{A},\s{O}_{\s{A}})$ for totally coprime skew seeds, but not for all skew seeds.

\section{Picard groups, universal torsors, and Cox rings}\label{MainThmSection}
For convenience, for each $i\in I$, we will start writing $X_{S,i}$ as simply $U_i$, and we will write $X_S$ as $U_0$.  For each $i\in I_{\uf}$, let $E_i$ denote the exceptional divisor associated to $\mu_i^{\s{X}}$.  That is, $E_i=U_{i}\setminus (U_0\cap U_{i})\subset \s{X}^{\star}$.  Similarly, for each $i\in F$, let $D_i$ denote the boundary divisor $D_{p_2(e_i)}=U_{i}\setminus U_0$.  We can view the $E_i$'s and $D_i$'s as Weil divisors in $\s{X}^{\star}$ or $\s{X}^{\Sigma}$.  Let $E_{i,\phi}$ and $D_{i,\phi}$ denote the corresponding intersections with $\lambda^{-1}(\phi)$.

\begin{rmk}\label{WeilCartPic}
When defining Weil divisors, \cite[\S II.6]{Hart} assumes the scheme under consideration is a Noetherian integral separated scheme which is regular in codimension one.  In general though, $\s{X}^{\star}$, $\s{X}^{\Sigma}$, and some fibers $\s{X}^{\star}_{\phi}$ and $\s{X}^{\Sigma}_{\phi}$ may fail to be separated.  However, these spaces are all Noetherian,\footnote{It is not clear whether or not the cluster varieties $\s{A}$ and $\s{X}$ for skew $S$, as defined in \S \ref{FGVersion}, are Noetherian.  This is why \cite{GHK3} frequently restricts to $\s{A}^{\ft}$ and $\s{X}^{\ft}$, cf. their Remark 2.6.} integral, and regular in codimension one since the spaces $U_i$, $i\in I\cup \{0\}$ are.  This is sufficient for the definition of Weil divisors and principal Weil divisors to make sense.  Furthermore, since each $U_i$ is locally factorial, so are $\s{X}^{\star}$ and each $\s{X}_{\phi}^{\star}$, as well as $\s{X}^{\Sigma}$ and each $\s{X}_{\phi}^{\Sigma}$ if $\Sigma$ is a non-singular fan (meaning that the rays of any cone of $\Sigma$ are generated by part of a basis for the lattice).  This is sufficient for the correspondence between Weil divisors and Cartier divisors as in \cite[Prop. 6.11]{Hart}, as well as the correspondence between Cartier divisor classes and isomorphism classes of line bundles as in \cite[Prop. 6.15]{Hart}.
\end{rmk}

\begin{rmk}\label{Ii}
It will be useful to have a description of the above divisors in terms of equations.  Let $\s{I}_i$ denote the ideal sheaf on $\s{X}^{\star}$ for $E_i$ if $i\in I_{\uf}$ or $D_i$ if $i\in F$.  Then for any $i\in I$ and $j\neq i$ in $I\cup \{0\}$, we have $\s{I}_i(U_{j})=\langle 1 \rangle$.  If $i\in I_{\uf}$, then $\s{I}_i(U_i)=\langle 1+z^{e_i} \rangle \subset \s{O}_{\s{X}^{\star}}(U_i)=\kk[N]$.  If $i\in F$, then $\s{I}_i(U_i)=\langle z^n|[n,e_i]>0\rangle \subset \s{O}_{\s{X}^{\star}}(U_i) = \kk[n\in N|[n,e_i]\geq 0]$.  The same equations reduced modulo $\s{I}_{\phi}$ give $E_{i,\phi}$ and $D_{i,\phi}$.
\end{rmk}

We define maps $W:M_I\hookrightarrow \Div(\s{X}^{\star})$ ($\Div$ denoting the group of Weil divisors) and $W_{\phi}:M_I\hookrightarrow \Div(\s{X}_{\phi}^{\star})$ as follows:
\begin{align}\label{Wm}
W(\sum_{i\in I} a_ie_i^{*}) &:=\sum_{i\in I_{\uf}} a_i E_i + \sum_{i\in F} a_i D_i \\
W_{\phi}(\sum_{i\in I} a_ie_i^{*}) &:=\sum_{i\in I_{\uf}} a_i E_{i,\phi} + \sum_{i\in F} a_i D_{i,\phi}. \nonumber
\end{align}

\begin{thm}\label{Pic}
The map $m\mapsto \s{O}_{\s{X}^{\star}}(W(m))$ induces an identification of $\coker(\?{p}_1)$ with $\Pic(\s{X}^{\star})$.  Similarly, for any $\phi \in T_{K_1^*}$, $m\mapsto \s{O}_{\s{X}_{\phi}^{\star}}(W_{\phi}(m))$ induces an identification of $\coker(\?{p}_1)$ with $\Pic(\s{X}_{\phi}^{\star})$.
\end{thm}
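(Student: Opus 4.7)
My plan is to apply the standard localization exact sequence for the Picard group, exploiting the facts that $U_0 = T_M$ is a torus (so $\Pic(U_0)=0$) and that the complement $\s{X}^{\star}\setminus U_0$ is precisely the union of the divisors $F_i$, where I set $F_i := E_i$ for $i\in I_{\uf}$ and $F_i := D_i$ for $i\in F$. Since each chart $U_i$ is smooth, Remark~\ref{WeilCartPic} lets me work interchangeably with Weil divisor classes and isomorphism classes of line bundles. The localization sequence $\bigoplus_{i\in I}\mathbb{Z}[F_i] \to \Pic(\s{X}^{\star}) \to \Pic(U_0)=0$ gives a surjection onto $\Pic(\s{X}^{\star})$; under the identification $M_I \cong \bigoplus_{i\in I}\mathbb{Z} e_i^*$, this surjection is precisely the map $m\mapsto[\s{O}_{\s{X}^{\star}}(W(m))]$ of the theorem.

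To compute the kernel, I would suppose $W(m)=(f)$ for some rational function $f$ on $\s{X}^{\star}$. Since $W(m)$ is supported off $U_0$, the restriction $f|_{U_0}$ is a unit on the torus $U_0$, so $f = c z^n$ for some $n\in N$ and $c\in \kk^*$. The heart of the argument is the formula $\op{ord}_{F_i}(z^n) = [n,e_i]$ for every $i\in I$. For $i\in F$ this is the standard toric computation: $p_2(e_i)$ is primitive by Assumption~\ref{PrimDistinct}(2), so $\op{ord}_{D_i}(z^n) = \langle n, p_2(e_i)\rangle = [n,e_i]$. For $i\in I_{\uf}$, Remark~\ref{Ii} gives the local equation of $E_i$ on $U_i$ as $1+z^{e_i}$; using that $[e_i,e_i]=0$, the mutation $(\mu_i^{\s{X}})^{\sharp}$ pulls $1+z_i^{e_i}$ back to $1+z_0^{e_i}$, so that the rational function $z_0^n$ on $U_0$ is represented on $U_i$ by $z_i^n(1+z_i^{e_i})^{[n,e_i]}$ times a unit, whose order along $E_i$ is exactly $[n,e_i]$. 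Summing yields $(z^n) = W(\?{p}_1(n))$, and so the kernel of $W : M_I \to \Pic(\s{X}^{\star})$ is exactly $\?{p}_1(N)$.

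For the fiber $\s{X}^{\star}_{\phi}$ I would run the same argument with dense open chart $U_0\cap \s{X}^{\star}_{\phi}$, which is itself a torus --- it is a fiber of the surjection $\lambda|_{U_0}: T_M \onto T_{K_1^*}$ --- and therefore has trivial Picard group. Reducing the local equations of Remark~\ref{Ii} modulo $\s{I}_{\phi}$ cuts out $E_{i,\phi}$ and $D_{i,\phi}$, and the order-of-vanishing computation is unchanged. Invertible functions on $U_0\cap \s{X}^{\star}_{\phi}$ are generated by the classes of $z^n$ modulo the relations $z^k = z^k(\phi)$ for $k\in K_1$; since $K_1 \subseteq \ker \?{p}_1$, these extra identifications do not shrink the image $\?{p}_1(N)\subseteq M_I$, so the kernel of $W_{\phi}$ is again $\?{p}_1(N)$.

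The main obstacle is the unfrozen order-of-vanishing computation: $z^n$ is not itself a regular function on $U_i$, so I must pass through the mutation to read off the correct order along $E_i$, and the calculation hinges on $[e_i,e_i]=0$ to make $1+z^{e_i}$ transform cleanly across the birational map. Secondary subtleties --- non-separatedness of $\s{X}^{\star}$, possible reducibility of fibers $\s{X}^{\star}_{\phi}$, and distinctness of the prime divisors $F_i$ --- are handled by Remark~\ref{WeilCartPic} together with Assumption~\ref{PrimDistinct} and the explicit description of the gluing.
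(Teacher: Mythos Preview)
Your proposal is correct and follows essentially the same approach as the paper: both arguments use that $U_0$ is a torus (hence has trivial class group/Picard group) so that the complementary divisors $E_i$, $D_i$ generate $\Pic(\s{X}^{\star})$, and both compute the kernel via the key formula $\op{ord}_{F_i}(z^n)=[n,e_i]$ (using Remark~\ref{Ii} and the mutation formula $[(\mu_i^{\s{X}})^{-1}]^{\sharp}(z^n)=z^n(1+z^{e_i})^{[n,e_i]}$ for the unfrozen case). The only cosmetic difference is that you package the surjectivity step as the localization exact sequence for $\Pic$, whereas the paper phrases it in terms of the divisor class group of an affine UFD as in Fulton's toric varieties book; these are the same argument, and your treatment of the fiber $\s{X}^{\star}_{\phi}$ likewise matches the paper's.
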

\begin{proof}
We do the proof for $\s{X}^{\star}$, but the argument for $\s{X}^{\star}_{\phi}$ is the same.  $U_0=\s{X}^{\star}\setminus \left(\bigcup_{i\in I_{\uf}} E_i \cup \bigcup_{i\in F} D_i\right)$ is affine with a UFD for its coordinate ring.  Hence, as in the proof of the proposition on pg 63 of \cite{Fult}, $W(M_I)$ surjects onto the divisor class group of $\s{X}^{\star}$.   Given $n\in N$, let $z^n$ be corresponding monomial on $U_0$, extended to be rational function on $\s{X}^{\star}$.  We have $\val_{E_i}(z^n) = [n,e_i]$ for each $i\in I_{\uf}$ since $[(\mu_i^{\s{X}})^{-1}]^{\sharp}(z^n)=z^n(1+z^{e_i})^{[n,e_i]}$, and by Remark \ref{Ii}, $1+z^{e_i}$ cuts out $E_i$ in $U_i$.  We also see from Remark \ref{Ii} that $\val_{D_i}(z^n)=[n,e_i]$ for each $i\in F$.  Hence, for each $n\in N$, the principal Weil divisor $(z^n)$ is $W(\sum_{i\in I} [n,e_i] e_i^*)$, i.e., $W(\?{p}_1(n))$.   All principal Weil divisors supported on $W(M_I)$ are of this form (non-monomial rational functions will have zeroes and/or poles in $U_0$), so the group of divisor classes is given by $W(M_I)/W(\?{p}_1(N)) \cong \coker(\?{p}_1)$.  The claim follows using the correspondence between the divisor class group and $\Pic$ as discussed in Remark \ref{WeilCartPic}.
\end{proof}

\begin{rmk}
The case with no frozen variables and $N_I=N$ is \cite[Thm. 4.1]{GHK3}.  Their argument is directly in terms of \v{C}ech cocycles, using the identification $\Pic(\s{X}^{\star})=H^1(\s{X}^{\star},\s{O}_{\s{X}^{\star}}^{\times})$ (and similarly with $\phi$).  This alternative argument generalizes to our setup using $\{U_i\}_{i\in I\cup \{0\}}$ as the \v{C}ech cover.
\end{rmk}

Now, following \cite[Construction 4.3]{GHK3} (slightly modified), we review the construction of a {\bf universal torsor} as in \cite{BH} (due to \cite{HK} in cases where $\Pic$ is torsion free).  Let $X$ be a scheme (either $\s{X}^{\star}$ or $\s{X}^{\star}_{\phi}$ for us).  In the case that $\Pic(X)$ is a free Abelian group, we can simply take
\begin{align*}
\UT_{X}:=\SPEC \bigoplus_{\s{L}\in \Pic X} \s{L}.
\end{align*}
In general though there is not a natural product on this direct sum, so we use the following construction.  Let $\Lambda$ denote a lattice of Cartier divisors on $X$ such that the map $\Lambda \rar \Pic(X)$, $m \mapsto [\s{O}_X(m)]$, is surjective.  Let $\Lambda_0$ denote the kernel of this map.  For our purposes, we will take $\Lambda:=M_I$ and $\Lambda_0:=\?{p}_1(N)$, identified with a lattice of divisors via $W$ or $W_{\phi}$ as in \eqref{Wm}.  In particular, for $m\in M_I$, we may write $m$ to mean $W(m)$ or $W_{\phi}(m)$.

Now, for each $m \in \Lambda$, viewing sections of $\s{O}_X(m)$ as elements of the function field $K(X)$ makes
\begin{align*}
    \s{R}_{X,\Lambda}:=\bigoplus_{m\in \Lambda} \s{O}_{X}(m)z^{m}
\end{align*}
into a locally free sheaf of $\Lambda$-graded $\s{O}_{X}$-algebras (here, $z^mz^{m'}:=z^{m+m'}$ for any $m,m'\in \Lambda$). 

Recall from \S \ref{prin} the map $\wt{p}_2:\s{A}_{\prin}^{\star}\rar \s{X}^{\star}$ which, when restricted to $A_{S_{\prin}}\rar X_S$, is given by $\wt{p}_2^{\sharp}:\kk[N]\rar \kk[M_I\oplus N]$, $z^n\mapsto z^{(\?{p}_1(n),n)}$.  We extend this map by defining
\begin{align}\label{p2sharp}
    \wt{p}_2|_{X_S}^{\sharp}:\kk[M_I][N]\rar \kk[M_I\oplus N], \hspace{.25 in} z^mz^n\mapsto z^{(m+\?{p}_1(n),n)},
\end{align}  
and defining $\wt{p}_2|_{X_{S,i}}^{\sharp}$ to be given by $[(\mu_i^{\s{A}_{\prin}^{\star}})^{-1}]^{\sharp}\circ \wt{p}_2|_{X_S}^{\sharp} \circ (\mu_i^{\s{X}})^{\sharp}$.  As before, we write $p_{2,\phi}:=\wt{p}_2|_{\s{A}_{\phi}^{\star}}$.
\begin{thm}\label{R}
$\s{A}_{\prin}^{\star}=\SPEC \s{R}_{\s{X}^{\star},M_I}$.  More precisely, $\wt{p}_2^{\sharp}$ induces an isomorphism 
\begin{align}\label{Req}\wt{p}_2^{\sharp}:\s{R}_{\s{X}^{\star},M_I} \risom (\wt{p}_2)_*\s{O}_{\s{A}_{\prin}^{\star}}
\end{align} 
of $M_I$-graded $\s{O}_{\s{X}^{\star}}$-algebras.  Similarly, $\s{A}_{t}^{\star}=\SPEC \s{R}_{\s{X}_{\phi}^{\star},M_I}$ in the sense that $p_{2,\phi}^{\sharp}$ induces an isomorphism
\begin{align}\label{Rphi} p_{2,\phi}^{\sharp}:\s{R}_{\s{X}_{\phi}^{\star},M_I} \risom (p_{2,\phi})_*\s{O}_{\s{A}_{\phi}^{\star}}
\end{align} 
\end{thm}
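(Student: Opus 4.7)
The plan is to verify \eqref{Req} chart-by-chart on the open cover $\{U_i\}_{i\in I\cup\{0\}}$ of $\s{X}^{\star}$, using the $M_I$-grading on $\s{O}_{\s{A}_{\prin}^{\star}}$ defined in \eqref{degprin}. The mutation formula \eqref{muAprin} shows that this grading extends globally (since the mutation monomial $z^{(\?{p}_1(e_j),e_j)}$ has degree $0$), and $\wt{p}_2^{\sharp}$ is by construction a morphism of $M_I$-graded $\s{O}_{\s{X}^{\star}}$-algebras. It therefore suffices to show, for each chart $U_i$ and each $d\in M_I$, that $\wt{p}_2|_{U_i}^{\sharp}$ carries the degree-$d$ piece $\s{O}_{\s{X}^{\star}}(W(d))(U_i)\cdot z^d$ of $\s{R}_{\s{X}^{\star},M_I}(U_i)$ bijectively onto the degree-$d$ piece of $\s{O}_{\s{A}_{\prin}^{\star}}(\wt{p}_2^{-1}(U_i))$, the latter being $\bigoplus_{n} \kk\cdot z^{(d+\?{p}_1(n),n)}$ subject to whatever monoid constraints $\wt{p}_2^{-1}(U_i)$ imposes.

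On $U_0$, the divisor $W(d)$ is disjoint from $U_0$, so $\s{O}_{\s{X}^{\star}}(W(d))(U_0)=\kk[N]$, and \eqref{p2sharp} directly identifies $\kk[N]\cdot z^d$ with $\bigoplus_{n\in N}\kk\cdot z^{(d+\?{p}_1(n),n)}$ via $z^nz^d\mapsto z^{(d+\?{p}_1(n),n)}$. On $U_i$ for $i\in I_{\uf}$, Remark \ref{Ii} gives $\s{O}_{\s{X}^{\star}}(W(d))(U_i)=(1+z^{e_i})^{-\langle e_i,d\rangle}\kk[N]$. Expanding $\wt{p}_2|_{U_i}^{\sharp}=[(\mu_i^{\s{A}_{\prin}})^{-1}]^{\sharp}\circ\wt{p}_2|_{X_S}^{\sharp}\circ(\mu_i^{\s{X}})^{\sharp}$ and invoking the identities $(\mu_i^{\s{X}})^{\sharp}(1+z^{e_i})=1+z^{e_i}$ and $(\mu_i^{\s{A}_{\prin}})^{\sharp}(1+z^{(\?{p}_1(e_i),e_i)})=1+z^{(\?{p}_1(e_i),e_i)}$ (both consequences of $[e_i,e_i]=0$), together with $\langle e_i,\?{p}_1(n)\rangle=[n,e_i]$, one checks that the factors $(1+z^{(\?{p}_1(e_i),e_i)})^{\pm(\ldots)}$ produced by the three composed mutations collapse, so that $(1+z^{e_i})^{-\langle e_i,d\rangle}z^n\cdot z^d\mapsto z^{(d+\?{p}_1(n),n)}$. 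For $i\in F$, $\wt{p}_2|_{U_i}^{\sharp}$ is the restriction of the toric map to the partial compactifications, and the monoid condition $[n,e_i]\geq -\langle e_i,d\rangle$ defining the LHS matches $\langle(e_i,0),(d+\?{p}_1(n),n)\rangle\geq 0$ defining the RHS. The chart-wise isomorphisms glue automatically, since $\wt{p}_2|_{U_i}^{\sharp}$ was defined by conjugating $\wt{p}_2|_{X_S}^{\sharp}$ by the very mutations used to glue the cluster varieties.

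The fibered statement \eqref{Rphi} follows by reducing \eqref{Req} modulo $\s{I}_{\phi}$ from \eqref{Iphi} and the analogous ideal $\s{J}_\phi:=\langle z^{(0,k)}-z^k(\phi)\mid k\in K_1\rangle$ cutting out $\s{A}_{\phi}^{\star}\subset\s{A}_{\prin}^{\star}$. Using $\?{p}_1(k)=0$ for $k\in K_1$, one computes $\wt{p}_2^{\sharp}(z^{n+k}-z^nz^k(\phi))=z^{(\?{p}_1(n),n)}(z^{(0,k)}-z^k(\phi))$, so $\wt{p}_2^{\sharp}$ sends generators of $\s{I}_{\phi}\cdot\s{R}_{\s{X}^{\star},M_I}$ into $\s{J}_\phi$; conversely each generator $z^{(0,k)}-z^k(\phi)$ equals $\wt{p}_2^{\sharp}(z^k-z^k(\phi))$, so the isomorphism \eqref{Req} descends to \eqref{Rphi}. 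I expect the main obstacle to be the bookkeeping in the $i\in I_{\uf}$ case: showing that the three nested mutation factors collapse relies essentially on $[e_i,e_i]=0$ and the duality $\langle e_i,\?{p}_1(n)\rangle=[n,e_i]$, while the other cases are either trivial ($U_0$) or a straightforward check of monoid conditions ($i\in F$).
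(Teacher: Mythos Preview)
Your proof is correct and follows essentially the same route as the paper: a chart-by-chart verification on $\{U_i\}_{i\in I\cup\{0\}}$ using the $M_I$-grading, with the three cases $U_0$, $i\in I_{\uf}$, $i\in F$ handled exactly as the paper does, and the fibered statement obtained by reduction modulo $\s{I}_\phi$. Your treatment of the $i\in I_{\uf}$ case is a compressed version of the paper's explicit exponent-tracking (the paper writes out each of the three composed maps and watches the powers of $1+z^{(\?{p}_1(e_i),e_i)}$ cancel), and your handling of \eqref{Rphi} via the explicit ideal $\s{J}_\phi$ is slightly more detailed than the paper's one-line appeal to the commutativity of \eqref{AprinCommutes}, but the underlying arguments are the same.
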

\begin{proof}
We first prove the $\prin$ version of the claim.  Recall that $\s{O}_{\s{A}_{\prin}^{\star}}$ is $M_I$-graded via $\deg(z^{(m,n)})=m-\?{p}_1(n)$, so the degree $m$ part $\s{O}_{\s{A}_{\prin}^{\star}}(m)$ is spanned by the terms of the form $z^{(m+\?{p}_1(n),n)}$ for $n\in N$.  We want to show that for each $m\in M_I$, $\wt{p}_2^{\sharp}|_{\s{O}_{\s{X}^{\star}}(m)}$ gives an isomorphism of $\s{O}_{\s{X}^{\star}}$-modules $\s{O}_{\s{X}^{\star}}(m)\cdot z^m \risom (\wt{p}_2)_*[\s{O}_{\s{A}_{\prin}^{\star}}(m)]$.

We use the affine cover $\{U_i\}_{i\in I\cup \{0\}}$ of $\s{X}^{\star}$ and the ideal sheaves from Remark \ref{Ii}.  For each $i\in I\cup \{0\}$, we let $\wt{U}_i=\wt{p}_2^{-1}(U_i)$ denote the corresponding preimage in $\s{A}_{\prin}^{\star}$.  Note that $\wt{U}_i=A_{S_{\prin},i}$ for each $i\in I$ and $\wt{U}_0=A_{S_{\prin}}$.  By definition, $(\wt{p}_2)_*\s{O}_{\s{A}_{\prin}^{\star}}(U_i)=\s{O}_{\s{A}_{\prin}^{\star}}(\wt{U}_i)$ for each $i\in I\cup \{0\}$.

One easily sees that $\wt{p}_2^{\sharp}$ is injective, so it suffices to check that $$\wt{p}_2^{\sharp}\left(\s{O}_{\s{X}^{\star}}(m)(U_i)\cdot z^m\right) =\s{O}_{\s{A}_{\prin}^{\star}}(\wt{U}_i)$$ for each $i\in I\cup \{0\}$.  For $i=0$, we have 
\begin{align*}
    \wt{p}_2^{\sharp}\left(\s{O}_{\s{X}^{\star}}(m)(U_0)\cdot z^m\right) &= \wt{p}_2^{\sharp} \left(\bigoplus_{n\in N} \kk\cdot (z^n) z^{m} \right) \\
                                                  &= \bigoplus_{n\in N} \kk\cdot z^{(m+\?{p}_1(n),n)} \\
                                                  &= \s{O}_{\s{A}_{\prin}^{\star}}(m)(\wt{U}_0).
\end{align*}

Now suppose $i\in I_{\uf}$.  $\wt{p}_2^{\sharp}$ as defined in \eqref{p2sharp} does not commute with mutations, so we will have to apply $[(\mu_i^{\s{X}})^{-1}]^{\sharp}\circ \wt{p}_2^{\sharp} \circ (\mu_i^{\s{X}})^{\sharp}$.  In the copy of the coordinate system $\kk[N]$ used for defining $U_i=X_{S,i}=T_M$, we have $$\s{O}_{\s{X}^{\star}}(m)(U_i)=\bigoplus_{n\in N} \bigoplus_{k\geq -\langle e_i,m\rangle} z^m z^n(1+z^{e_i})^k.$$  We apply $(\mu_i^{\s{X}})^{\sharp}$ to express this in terms of the coordinates for $U_0$, resulting in:
\begin{align*}
\s{O}_{\s{X}^{\star}}(m)(U_i)=\bigoplus_{n\in N} \bigoplus_{k\geq -\langle e_i,m\rangle} \kk\cdot z^m z^n(1+z^{e_i})^{-[n,e_i]+k}.
\end{align*}
Now applying $\wt{p}_2^{\sharp}$ yields
\begin{align*}
\wt{p}_2^{\sharp} \left(\s{O}_{\s{X}^{\star}}(m)(U_i)\right)&=\bigoplus_{n\in N} \bigoplus_{k\geq -\langle e_i,m\rangle} z^{(m+\?{p}_1(n),n)}(1+z^{(\?{p}_1(e_i),e_i)})^{-[n,e_i]+k} \\
&=\bigoplus_{n\in N} \bigoplus_{\ell \geq 0} z^{(m+\?{p}_1(n),n)}(1+z^{(\?{p}_1(e_i),e_i)})^{-\langle e_i,m+\?{p}_1(n)\rangle+\ell}.
\end{align*}
Using \eqref{muAprin}, we see that applying $((\mu_i^{\s{A}_{\prin}})^{\sharp})^{-1}$ transforms this to
\begin{align*}
\bigoplus_{n\in N} \bigoplus_{\ell \geq 0} z^{(m+\?{p}_1(n),n)}(1+z^{(\?{p}_1(e_i),e_i)})^{\ell},
\end{align*}
and this is indeed equal to $\s{O}_{\s{A}_{\prin}^{\star}}(m)(\wt{U}_i)$.

Finally, suppose $i\in F$.  Then 
\begin{align*}
\wt{p}_2^{\sharp}\left(\s{O}_{\s{X}^{\star}}(m)(U_i)\right)&=\wt{p}_2^{\sharp}\left(\bigoplus_{\substack{n\in N \\ [n,e_i]\geq -\langle e_i,m\rangle}} \kk\cdot z^m z^n \right)\\
                             &=\bigoplus_{\substack{n\in N \\ \langle e_i,m+\?{p}_1(n)\rangle \geq 0}} \kk\cdot \wt{p}_2^{\sharp}(z^m z^n)\\
                             &=\bigoplus_{\substack{n\in N \\ \langle (e_i,0),(m+\?{p}_1(n),n)\rangle \geq 0}} \kk\cdot z^{(m+\?{p}_1(n),n)},
\end{align*}
where $\langle\cdot,\cdot\rangle$ in the last line is the dual pairing between $N_{\prin}$ and $M_{\prin}$.  Since $\langle (e_i,0),(m+\?{p}_1(n),n)\rangle = \val_{D_{(e_i,0)}}(z^{(m+\?{p}_1(n),n)})$, we see that the last line above is equal to $\s{O}_{\s{A}_{\prin}^{\star}}(m)(\wt{U}_i)$, as desired.

For the claim over $\phi$, recall from Remark \ref{Ii} that $E_{i,\phi}$ and $D_{i,\phi}$ are cut out by the same equations as $E_i$ and $D_i$, just reduced modulo $\s{I}_{\phi}$.  The claim now follows from the commutativity of \eqref{AprinCommutes}.
\end{proof}

We now want to take a quotient of $\s{R}_{X,\Lambda}$ which identifies the degree $m$ and degree $m'$ parts whenever $m-m'\in \Lambda_0$ (i.e., whenever $\s{O}_X(m)\cong \s{O}_X(m')$).  Let $\rho:\Lambda_0\rar K(X)^{\times}$ be a homomorphism of Abelain groups such that for each $m_0\in\Lambda_0$ and $m\in \Lambda$, we have $\rho(m_0)\s{O}_X(m) \subseteq \s{O}_X(m+m_0)$.  This gives what one calls a ``shifting family'' ($\rho_{\lambda}$ in \cite[Construction 4.3]{GHK3} corresponds to multiplication by our $\rho(\lambda)z^{\lambda}$).  One then defines a sheaf $\s{I}_{X,\Lambda,\rho}\subset \s{R}_{X,\Lambda}$ on $X$ by taking $\s{I}_{X,\Lambda,\rho}(U)$ to be the ideal generated by elements of the form $f-f\rho(m_0)z^{m_0}$ for $f\in \s{R}_{X,\Lambda}(U)$ and $m\in \Lambda_0$.  Finally, we can define the {\bf universal torsor} to be
\begin{align*}
\UT_X:=\SPEC \s{R}_{X,\Lambda}/\s{I}_{X,\Lambda,\rho}.
\end{align*}

\begin{thm}\label{UTor}
For any $\phi \in T_{K_1^*}$ and any $t\in i^{-1}(\phi)\subset T_M$, $\wt{p}_2^{\sharp}$ induces an isomorphism $\UT_{\s{X}_{\phi}^{\star}}\cong \s{A}_{t}^{\star}$.
\end{thm}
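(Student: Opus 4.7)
By construction, $\UT_{\s{X}_{\phi}^{\star}} = \SPEC\bigl(\s{R}_{\s{X}_{\phi}^{\star},M_I}/\s{I}_{\s{X}_{\phi}^{\star},M_I,\rho}\bigr)$ for a shifting family $\rho$; by \eqref{Rphi} in Theorem~\ref{R}, $p_{2,\phi}^{\sharp}$ identifies $\s{A}_{\phi}^{\star}$ with $\SPEC\s{R}_{\s{X}_{\phi}^{\star},M_I}$; and $\s{A}_t^{\star}$ sits inside $\s{A}_{\phi}^{\star}$ as the closed subscheme defined by the image of the ideal $\s{I}_t$ from~\eqref{It}. The plan is therefore to produce an explicit shifting family $\rho$ depending on $t$ such that $p_{2,\phi}^{\sharp}$ carries $\s{I}_{\s{X}_{\phi}^{\star},M_I,\rho}$ precisely onto this image of $\s{I}_t$; the desired isomorphism then follows formally.

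To define $\rho$, I would first choose a splitting $\sigma:\Lambda_0 = \?{p}_1(N) \to N$ of $\?{p}_1$ (which exists since $\Lambda_0\subseteq M_I$ is free abelian), and set
\begin{align*}
    \rho(m_0) := z^{\sigma(m_0)}(t)\cdot z^{-\sigma(m_0)} \in K(\s{X}_{\phi}^{\star})^{\times}.
\end{align*}
That $\rho$ is a group homomorphism follows because $\sigma$ is one and evaluation at $t$ is multiplicative. The shifting property $\rho(m_0)\cdot\s{O}_{\s{X}_{\phi}^{\star}}(m)\subseteq\s{O}_{\s{X}_{\phi}^{\star}}(m+m_0)$ follows from the valuation computation in the proof of Theorem~\ref{Pic}: the principal divisor of $z^{\sigma(m_0)}$ equals $W_{\phi}(m_0)$, while the scalar $z^{\sigma(m_0)}(t)\in\kk^{\times}$ does not affect divisors.

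The main computation is then to verify that $p_{2,\phi}^{\sharp}$ matches generators. Applying~\eqref{p2sharp} to a typical generator $f - f\rho(m_0)z^{m_0}$ with $f = z^{n}z^{\mu}$, and using $\?{p}_1(n-\sigma(m_0)) = \?{p}_1(n) - m_0$, one obtains
\begin{align*}
    z^{(\mu+\?{p}_1(n),\,n)} - z^{\sigma(m_0)}(t)\cdot z^{(\mu+\?{p}_1(n),\,n-\sigma(m_0))},
\end{align*}
which, after the renaming $m := \mu+\?{p}_1(n)$, $n' := n - \sigma(m_0)$, is exactly the generator $z^{(m,\,n'+\sigma(m_0))} - z^{\sigma(m_0)}(t)\,z^{(m,n')}$ of $\s{I}_t$. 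Conversely, an arbitrary generator $z^{(m,n+n_0)} - z^{(m,n)}z^{n_0}(t)$ of $\s{I}_t$ reduces to one of these after decomposing $n_0 = \sigma(\?{p}_1(n_0)) + k$ with $k\in K_1$ and invoking the defining relation $z^{(0,k)} = z^{k}(\phi) = z^{k}(t)$ of $\s{A}_{\phi}^{\star}\subset\s{A}_{\prin}^{\star}$, where the last equality crucially uses $i(t)=\phi$. The main obstacle is this last piece of bookkeeping: one must confirm that every generator is accounted for despite the a priori distinction between $K_1=\ker p_1$ and $\ker\?{p}_1$, and that the resulting ideal does not depend on the choice of splitting $\sigma$ (different splittings should yield isomorphic $\rho$'s, consistent with the universal torsor being well-defined only up to isomorphism).
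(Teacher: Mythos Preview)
Your proposal is correct and follows essentially the same approach as the paper. You invoke Theorem~\ref{R}, define the shifting family $\rho(m_0)=z^{\sigma(m_0)}(t)\,z^{-\sigma(m_0)}$ for a chosen section $\sigma$ of $\?{p}_1$ (the paper writes $s$ for $\sigma$), and verify that $\wt{p}_2^{\sharp}$ carries the generators of $\s{I}_{\s{X}_{\phi}^{\star},M_I,\rho}$ to those of $\s{I}_t$; the only cosmetic difference is that for the converse direction the paper obtains an arbitrary $n_0$ by \emph{varying} the section $s$, whereas you fix $\sigma$ and absorb the discrepancy $k=n_0-\sigma(\?{p}_1(n_0))$ using the relation $z^{(0,k)}=z^{k}(t)$ coming from $i(t)=\phi$---these are two phrasings of the same bookkeeping, and the paper likewise notes that the relations induced by $\s{I}_{\phi}$ make the construction independent of the choice of section.
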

We note that in the special case of a skew seed without frozen variables and with $N_I=N$, this and Theorem \ref{R} together are essentially \cite[Thm. 4.4]{GHK3}.
\begin{proof}
Given Theorem \ref{R}, this amounts to showing that generators for the ideal sheaf $\s{I}_t$ cutting out $\s{A}_t^{\star}$, as given in \eqref{It}, are $\wt{p}_2^{\sharp}$ of generators for a sheaf $\s{I}_{\s{X}_{\phi}^{\star},M_I,\rho}$ on $\s{X}_{\phi}^{\star}$ arising from a shifting family $\rho$.

Recall the sheaf $\s{I}_{\phi}$ from \eqref{Iphi}.  Choose any section $s$ of $\?{p}_1$, and consider the shifting family corresponding to $\rho(m_0):=z^{s(m_0)}(t)z^{-s(m_0)}$, $m_0\in \?{p}_1(N)\subset M_I$, where $z^{s(m_0)}(t)$ denotes the evaluation of $z^{s(m_0)}$ at $t$.  The corresponding sheaf $\s{I}_{\s{X}^{\star}_{\phi},M_I,\rho}\subset \s{R}_{\s{X}^{\star}_{\phi},M_I}$ is, in terms of the coordinate system for $U_0$, globally generated by elements of the form 
\begin{align}\label{IXM}
    z^{m-m_0}z^{n+s(m_0)}-z^{m-m_0}z^nz^{s(m_0)}(t)z^{m_0} \mbox{~~(mod $\s{I}_{\phi}$)}
\end{align} 
for $m\in M_I$, $n\in N$, and $m_0\in \?{p}_1(N)$.  The relations induced by $\s{I}_{\phi}$ ensure that this does not depend on the choice of $s$.  Applying $\wt{p}_2^{\sharp}$ to \eqref{IXM} yields
\begin{align*}
    z^{(m+\?{p}_1(n),n+s(m_0))}-z^{(m+\?{p}_1(n),n)}z^{s(m_0)}(t).
\end{align*}
This is a generator for $\s{I}_t$ as given in \eqref{It}, and all the generators from \eqref{It} can be obtained in this way by varying the choices of $m$, $n$, $m_0$, and $s$.  Hence, $\wt{p}_2^{\sharp}(\s{I}_{\s{X}^{\star}_{\phi},M_I,\rho}) = \s{I}_t$, as desired.
\end{proof}

 For $X$, $\Lambda$, and $\rho$ as above, we define the {\bf Cox ring} of $X$ to be
 \begin{align*}
     \Cox(X):=\Gamma(X,\UT_X)
 \end{align*}
 as in \cite[Def. 4.5]{GHK3}.  If $\Pic(X)$ is torsion free, this is equivalent to $\bigoplus_{\nu \in \Pic(X)} \Gamma(X,\s{L}_{\nu})$.  We have the following immediate corollary of Theorem \ref{UTor}.
 
\begin{cor}\label{CoxCor}
For any $\phi \in T_{K_1^*}$ and any $t\in i^{-1}(\phi)\subset T_M$, $\wt{p}_2|_{\s{A}_t^{\star}}$ induces an isomorphism $$\wt{p}_2|_{\s{A}_t^{\star}}^{\sharp}:\Cox(\s{X}_{\phi}^{\star})\risom \Gamma(\s{A}_t^{\star},\s{O}_{\s{A}_{t}^{\star}}).$$
\end{cor}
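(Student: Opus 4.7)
The plan is to derive the corollary directly from Theorem \ref{UTor} by applying the global sections functor $\Gamma(\s{X}_{\phi}^{\star},-)$ to the isomorphism of $\s{O}_{\s{X}_{\phi}^{\star}}$-algebras that it provides. First I would unwind the definition: since $\UT_{\s{X}_{\phi}^{\star}}$ is constructed as the relative spectrum $\SPEC(\s{R}_{\s{X}_{\phi}^{\star},M_I}/\s{I}_{\s{X}_{\phi}^{\star},M_I,\rho})$ over $\s{X}_{\phi}^{\star}$, the definition $\Cox(\s{X}_{\phi}^{\star}) := \Gamma(\s{X}_{\phi}^{\star},\UT_{\s{X}_{\phi}^{\star}})$ is literally $\Gamma(\s{X}_{\phi}^{\star}, \s{R}_{\s{X}_{\phi}^{\star},M_I}/\s{I}_{\s{X}_{\phi}^{\star},M_I,\rho})$.

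Next I would note that the commutativity of diagram \eqref{AprinCommutes}, together with $t\in i^{-1}(\phi)$, ensures that $\wt{p}_2$ carries $\s{A}_t^{\star}\subset \s{A}_{\prin}^{\star}$ into $\s{X}_{\phi}^{\star}\subset \s{X}^{\star}$; denote this restriction $p_{2,t}:\s{A}_t^{\star}\rar \s{X}_{\phi}^{\star}$. Theorem \ref{UTor} then gives an isomorphism of $\s{O}_{\s{X}_{\phi}^{\star}}$-algebras
\[
\wt{p}_2^{\sharp}: \s{R}_{\s{X}_{\phi}^{\star},M_I}/\s{I}_{\s{X}_{\phi}^{\star},M_I,\rho} \risom (p_{2,t})_{*}\s{O}_{\s{A}_t^{\star}}.
\]

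Finally, taking $\Gamma(\s{X}_{\phi}^{\star},-)$ of both sides produces the desired map. The left-hand side becomes $\Cox(\s{X}_{\phi}^{\star})$ by the definition recalled above, and the right-hand side becomes $\Gamma(\s{X}_{\phi}^{\star},(p_{2,t})_{*}\s{O}_{\s{A}_t^{\star}}) = \Gamma(\s{A}_t^{\star},\s{O}_{\s{A}_t^{\star}})$ by the definition of pushforward sheaves. The induced map is precisely $\wt{p}_2|_{\s{A}_t^{\star}}^{\sharp}$, completing the proof.

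Since the corollary is essentially an application of the global sections functor to Theorem \ref{UTor}, there is no substantive obstacle; the only point requiring care is keeping the base points $t$ and $\phi = i(t)$ matched correctly so that the restriction $p_{2,t}$ lands in $\s{X}_{\phi}^{\star}$, which is exactly the content of \eqref{AprinCommutes}.
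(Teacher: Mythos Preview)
Your proof is correct and matches the paper's approach: the paper simply states that Corollary \ref{CoxCor} is an ``immediate corollary of Theorem \ref{UTor},'' and what you have written is precisely the unpacking of that immediate deduction via global sections. There is nothing to add.
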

 
 \begin{rmk}
Suppose $S$ is skew.  Then one can prove versions of Theorems \ref{R} and \ref{UTor} and Corollary \ref{CoxCor} for the full and finite-tree versions of the $\s{A}$- and $\s{X}$-spaces with various conditions on $S$ or the generality of $t$, exactly analogous to \cite[Thm. 4.4 and Cor. 4.6]{GHK3} but now allowing partial compactifications associated to frozen variables and not requiring $N_I=N$.
\end{rmk}
 
\subsection{Extension to the further partial compactifications}

Now consider a fan $\Sigma$ as in \S \ref{compact}.  If $\Sigma$ is non-singular (meaning that every cone is generated by part of a basis for $M$), then the above results all extend easily to the partial compactifications associated to $\Sigma$.  However, when $\Sigma$ is not smooth, $\s{X}^{\Sigma}$ will not be smooth either, and as a result, only a proper subset of Weil divisors will necessarily be Cartier.  Hence, while $W$ and $W_{\phi}$ from \eqref{Wm} will still give isomorphisms of the divisor class groups of $\s{X}^{\Sigma}$ and $\s{X}_{\phi}^{\Sigma}$ with $\coker(\?{p}_1)$, $\Pic(\s{X}^{\Sigma})$ and $\Pic(\s{X}^{\Sigma}_{\phi})$ will only correspond to a proper sublattice.  Namely, if $M_{\Sigma}$ denotes the sublattice of $M_I$ which is identified with Cartier divisors by $W$ (equivalently, by $W_{\phi}$), then $W$ and $W_{\phi}$ identify $M_{\Sigma}/(\?{p}_1(N)\cap M_{\Sigma})$ with $\Pic(\s{X}^{\Sigma})$ and $\Pic(\s{X}_{\phi}^{\Sigma})$, respectively.

Now recall that $\s{O}_{\s{A}_{\prin}^{\Sigma}}$ is $M_I$-graded.  Let $\s{O}_{\?{\s{A}}_{\prin}^{\Sigma}}$ denote the subsheaf spanned by homogeneous elements with degree in $M_{\Sigma}$.  Letting $\wt{p}_2^{\Sigma}$ denote the extension of $\wt{p}_2$ to a map $\s{A}_{\prin}^{\Sigma}\rar \s{X}_{\Sigma}$, we have
\begin{align}\label{RtoASigma}
    (\wt{p}_2^{\Sigma})^{\sharp}:\s{R}_{\s{X}^{\Sigma},M_{\Sigma}} \risom (\wt{p}_2^{\Sigma})_* \s{O}_{\?{\s{A}}_{\prin}^{\Sigma}}.
\end{align}

Let $\?{\s{A}}_{\prin}^{\Sigma}:= \SPEC(\s{O}_{\?{\s{A}}_{t}^{\Sigma}})$.  It follows from \eqref{RtoASigma} that $\wt{p}_2^{\Sigma}$ factors through $\?{\s{A}}_{\prin}^{\Sigma}$.  We write $p_2^{\Sigma}$ for the factor $\?{\s{A}}_{\prin}^{\Sigma} \rar \s{X}_{\Sigma}$.

Using the same shifting family as in Theorem \ref{UTor}, we get 
\begin{align*}
    \?{\s{A}}_{t}^{\Sigma} \cong \UT_{\s{X}_{\phi}^{\Sigma}}
\end{align*}
for $t\in i^{-1}(\phi)$, and so
\begin{align*}
   \Cox(\s{X}_{\phi}^{\Sigma})\cong \Gamma(\?{\s{A}}_{t}^{\Sigma},\s{O}_{\?{\s{A}}_{t}^{\Sigma}}).
\end{align*}

In summary:
\begin{thm}\label{SigmaThm}
There is a sublattice $M_{\Sigma}\subset M_I$ isomorphic to $\Pic(\s{X}^{\star})$ and $\Pic(\s{X}^{\Sigma}_{\phi})$.  If $\Sigma$ is non-singular, $M_{\Sigma}=M_I$.  Taking global Spec of the subsheaf of $\s{O}_{\s{A}^{\Sigma}_{\prin}}$ whose sections are $M_{\Sigma}$-graded gives a scheme $p_2^{\Sigma}:\?{\s{A}}^{\Sigma}_{\prin}\rar \s{X}^{\Sigma}$ with an isomorphism $(p_2^{\Sigma})^{\sharp}:\s{R}_{\s{X}^{\Sigma},M_{\Sigma}} \risom (p_2^{\Sigma})_* \s{O}_{\?{\s{A}}_{\prin}^{\Sigma}}$.  Furthermore, for any $\phi \in T_{K_1^*}$ and any $t\in i^{-1}(\phi)\subset T_M$, $\?{\s{A}}_{t}^{\Sigma}\cong\UT_{\s{X}_{\phi}^{\Sigma}}$, hence $\Cox(\s{X}_{\phi}^{\Sigma})\cong \Gamma(\?{\s{A}}_{t}^{\Sigma},\s{O}_{\?{\s{A}}_{t}^{\Sigma}})$.
\end{thm}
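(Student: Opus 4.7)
The plan is to extend Theorems \ref{Pic}, \ref{R}, \ref{UTor}, and Corollary \ref{CoxCor} to the setting of the partial compactification $\s{X}^\Sigma$, using the added toric charts $X_{S,\sigma}$ for $\sigma\in\Sigma$ (and their preimages $\wt{X}_{S,\sigma}$ in $\s{A}^\Sigma_{\prin}$) in place of just the $U_i$'s. I would begin by identifying the sublattice $M_\Sigma$. Since the added charts introduce no new prime divisors beyond the $D_i$'s for $i\in F$ that already live in $\s{X}^\star$, the proof of Theorem \ref{Pic} goes through essentially verbatim to show that the divisor class groups of $\s{X}^\Sigma$ and $\s{X}^\Sigma_\phi$ are still $\coker(\?{p}_1)$ via $W$ and $W_\phi$. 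On the new affine chart $X_{S,\sigma} = \Spec \kk[\sigma^\vee\cap N]$, standard toric theory says that $W(m)=\sum a_i E_i+\sum a_i D_i$ is Cartier iff there exists $n_\sigma\in N$ with $\langle p_2(e_{i_j}),n_\sigma\rangle = a_{i_j}$ for each ray generator $p_2(e_{i_j})$ of $\sigma$. Define $M_\Sigma\subset M_I$ as the sublattice of $m$'s for which this condition holds simultaneously at every $\sigma\in\Sigma$. When $\Sigma$ is non-singular, the ray generators of each cone extend to a basis of $M$, so every $m\in M_I$ satisfies the condition and $M_\Sigma=M_I$. The isomorphism with $\Pic$ then follows by the Cartier/Picard correspondence recalled in Remark \ref{WeilCartPic}.

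To extend Theorem \ref{R}, the arguments on the charts $\{U_i\}_{i\in I\cup\{0\}}$ carry over unchanged; on each new chart $X_{S,\sigma}$ and its preimage $\wt{X}_{S,\sigma}=\Spec\kk[\wt\sigma^\vee\cap N_{\prin}]$, one runs the analogue of the $i\in F$ case of Theorem \ref{R}. For $m\in M_\Sigma$, sections of $\s{O}_{\s{X}^\Sigma}(W(m))$ on $X_{S,\sigma}$ are spanned by the monomials $z^m z^n$ with $[n,e_{i_j}]\geq -a_{i_j}$ for each ray of $\sigma$, and applying $\wt{p}_2^\sharp$ produces exactly the $m$-graded part of $\s{O}_{\s{A}^\Sigma_{\prin}}(\wt{X}_{S,\sigma})$. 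The key identity is $\langle (e_{i_j},0),(m+\?{p}_1(n),n)\rangle = \langle e_{i_j},m\rangle+[n,e_{i_j}]$, which converts the regularity condition on $\wt{X}_{S,\sigma}$ into the condition above. This yields the isomorphism \eqref{RtoASigma} and, via relative $\SPEC$ of the $M_\Sigma$-graded subsheaf, defines $\?{\s{A}}^\Sigma_{\prin}$ and the map $p_2^\Sigma$. The universal torsor and Cox ring statements then follow by reproducing the shifting-family argument of Theorem \ref{UTor}: taking $\rho(m_0):=z^{s(m_0)}(t)z^{-s(m_0)}$ for a section $s$ of $\?{p}_1$ and $m_0\in \?{p}_1(N)\cap M_\Sigma$, the generators of $\s{I}_{\s{X}^\Sigma_\phi,M_\Sigma,\rho}$ of the form \eqref{IXM} are mapped by $\wt{p}_2^\sharp$ to generators of the ideal cutting out $\?{\s{A}}^\Sigma_t$ inside $\?{\s{A}}^\Sigma_{\prin}$, giving $\UT_{\s{X}^\Sigma_\phi}\cong \?{\s{A}}^\Sigma_t$ and hence the Cox ring identification.

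The main obstacle is the toric bookkeeping at the new charts $X_{S,\sigma}$: one must correctly translate between the support-function description of Cartier divisors on the toric partial compactification and the $M_\Sigma$-graded structure on $\s{O}_{\s{A}^\Sigma_{\prin}}$, and verify that restricting to $M_\Sigma$ on both sides is simultaneously necessary and sufficient. This restriction is what forces the introduction of $\?{\s{A}}^\Sigma_{\prin}$ rather than working with all of $\s{A}^\Sigma_{\prin}$, since homogeneous sections of degree in $M_I\setminus M_\Sigma$ correspond to reflexive but non-invertible sheaves on $\s{X}^\Sigma$ that fall outside the $\s{R}_{X,\Lambda}$ framework used to build the universal torsor.
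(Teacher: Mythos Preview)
Your proposal is correct and follows the same approach as the paper's treatment in \S 3.2, which is quite terse: it simply asserts that the arguments of Theorems \ref{Pic}, \ref{R}, and \ref{UTor} extend, defines $M_\Sigma$ as the sublattice of $M_I$ mapping to Cartier divisors under $W$, and invokes the same shifting family. You have filled in more detail than the paper provides---in particular the explicit support-function Cartier criterion on the toric charts $X_{S,\sigma}$ and the verification of the $i\in F$ computation on these charts---but the structure and ideas are the same.
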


\section{Theta bases for line bundles}\label{ThetaLineBundles}

For a skew seed $S$, the main construction of \cite{GHKK}, applied to a scattering diagram in $M_{\prin}$ with initial scattering functions $1+z^{p_{1,\prin}(e_i)}$, $i\in I_{\uf}$, gives a linearly independent collection $\{\vartheta_q\}_{q\in M_{\prin}}$ of functions in some formal completion of $\kk[M_{\prin}]$ (i.e., some of the functions $\vartheta_q$ might be {\it formal} Laurent series rather than just Laurent polynomials).  These can be viewed as functions on some formal limit of $\s{A}_{\prin}^{\circ}$, or on $\s{A}_{\prin}^{\circ}$ itself when they are actually Laurent polynomials (recall from \S \ref{mutaskew} that by $\s{A}_{\prin}^{\circ}$ we mean what we call $\s{A}_{\prin}^{\star}$ but without any of the boundary divisors that we associated to the frozen vectors).

The definition of the theta functions and the fact that the initial scattering functions are homogeneous of $\deg=0$ (since $\deg(z^{p_{1,\prin}(e_i)})=0$) implies that $$\deg(\vartheta_q)=\deg(z^q)$$ (a fact also used in \cite[Construction 7.11]{GHKK}).  Furthermore, since $z^{(0,n)}$ is invariant under the initial scattering automorphisms (cf. \eqref{muAprin}) for each $n\in N$, the same is true of all wall-crossings for the scattering diagram used for constructing the theta functions.  It follows that $\vartheta_{(0,n)}=z^{(0,n)}$ for each $n\in N$, and moreover, 
\begin{align}\label{mn}
    \vartheta_{(m,n)}=z^{(0,n)}\vartheta_{(m,0)} 
\end{align}
for each $(m,n)\in M_{\prin}$.  Hence, for each $i\in F$ and $(m,n)\in M_{\prin}$, 
\begin{align}\label{valn}
    \val_{D_{e_i}}(\vartheta_{(m,n)}) = \val_{D_{e_i}}(\vartheta_{(m,0)}).
\end{align}

Let 
\begin{align}\label{Xi}
    \Xi:=\{q\in M_{\prin}|\vartheta_q\in \Gamma(\s{A}_{\prin}^{\star},\s{O}_{\s{A}_{\prin}^{\star}})\}.
\end{align}
Equivalently, $\Xi$ consists of the $q\in M_{\prin}$ for which $\vartheta_q$ can be expressed as a Laurent polynomial (rather than just a formal Laurent series) and $\val_{D_{e_i}}(\vartheta_q)\geq 0$ for each $i\in F$.  It follows from \eqref{mn}, \eqref{valn}, and the fact that $\vartheta_0:=1$ that $(0,N)\subset \Xi$ and that $\Xi$ is closed under addition by $(0,N)$.

\cite{Man3New} describes a generalization of the \cite{GHKK} construction which gives theta functions associated to seeds which are not necessarily skew.  The above definitions and claims still make sense and hold by the same arguments. 

\begin{ass}\label{FullFG}
$\{\vartheta_q\}_{q\in \Xi}$ spans $\Gamma(\s{A}_{\prin}^{\star},\s{O}_{\s{A}_{\prin}^{\star}})$ over $\kk$.
\end{ass}
It is unknown whether or not Assumption \ref{FullFG} always holds, cf. \cite[Question 7.17]{GHKK}.   For skew seeds, \cite{GHKK} says that ``the full Fock-Goncharov conjecture holds'' for $\s{A}_{\prin}^{\circ}$ when $\{\vartheta_q\}_{q\in M_{\prin}}$ forms an additive ($\kk$-vector space) basis for $\Gamma(\s{A}_{\prin}^{\circ},\s{O}_{\s{A}_{\prin}^{\circ}})$.  See \cite[Prop. 0.14]{GHKK} for a number of conditions which imply the full Fock-Goncharov conjecture, including $S$ being acyclic or admitting a maximal Green sequence.  If the full Fock-Goncharov conjecture holds for $\s{A}_{\prin}^{\circ}$, and if one makes the additional assumption that every frozen index in $S$ has an optimized seed (cf. \cite[Def. 9.1]{GHKK}), then \cite[Lemma 9.10(2)]{GHKK} implies that Assumption \ref{FullFG} holds.\footnote{Analogous results for non-skew seeds are not currently known.}  In particular, \cite{GHKK} claims that these conditions hold for the cluster structures on Grassmannians, on a maximal unipotent subgroup $N\subset \SL_r$, on the basic affine space $\SL_r/N$ (cf. \cite{Magee}), and on $(\SL_r/N)^3/\SL_r$.

When Assumption \ref{FullFG} does hold, we also get theta bases for each $\s{A}_{\phi}^{\star}$ as follows.  Let $s$ be any section of $\?{p}_1$.  For $\phi\in T_{K_1^*}$ and $(m,n_0)\in M_I\oplus \?{p}_1(N)$, define $\vartheta_{(m,n_0),\phi,s}:=\vartheta_{(m,s(n_0))}|_{\s{A}_{\phi}^{\star}}$.  Then $\Theta_{\Xi,\phi,s}:=\{\vartheta_{(m,n_0),\phi,s}\}_{(m,s(n_0))\in \?{p}_1(\Xi)}$ is an additive basis for $\s{A}_{\phi}^{\star}$. 

We note that $\vartheta_{(m,n_0),\phi,s}$ is independent of $s$ up to scaling.  More precisely, for a second section $s'$ of $\?{p}_1$, $\vartheta_{(m,n_0),\phi,s'}=z^{s'(n_0)-s(n_0)}(\phi)\vartheta_{q,\phi,s}$.  

Note that $\deg(\vartheta_{(m,n_0),\phi,s})=m-n_0$.  Recall that $\Xi$ is closed under addition by $(0,N)$.  Let $\?{\Xi}$ denote the projection of $\Xi$ to $M_I$ (equivalently, the intersection with $(M_I,0)$).  Given $\lambda \in M_I$, let $$\Xi_{\lambda}:=(\lambda+\?{p}_1(N))\cap \?{\Xi} \subset M_I$$ and let $$\vartheta_{m,\lambda,\phi,s}:=\vartheta_{(m,m-\lambda),\phi,s}=\left. z^{s(m-\lambda)}\vartheta_{(m,0)}\right|_{\s{A}_{\phi}^{\star}}.$$  Note that $\{\vartheta_{m,\lambda,\phi,s}\}_{m\in \Xi_{\lambda}}$ is exactly the set of degree $\lambda$ elements of $\Theta_{\Xi,\phi,s}$.

Theorem \ref{R} now easily implies the following:
\begin{thm}\label{ThetaBasis}
Suppose $S$ satisfies Assumption \ref{FullFG}.  Given $\lambda\in M_I$, let $\s{L}_\lambda:=\s{O}(W(\lambda))$ denote the corresponding line bundle on $\s{X}^{\star}$.  Under the identification $\wt{p}_2^{\sharp}$ of \eqref{Req}, $\{\vartheta_q\}_{q\in \Xi \cap \deg^{-1}(\lambda)}$ forms an additive basis for $H^0(\s{X}^{\star},\s{L}_\lambda)$.

Similarly, for $\phi \in T_{K_1^*}$, let $\s{L}_{\lambda,\phi}:=\s{L}_\lambda|_{\s{X}_{\phi}^{\star}}$.  Fix a section $s$ of $\?{p}_1$.  Then under the identification $p_{2,\phi}^{\sharp}$ of \eqref{Rphi}, we have that $\{\vartheta_{m,\lambda,\phi,s}\}_{m\in \Xi_{\lambda}}$ forms an additive basis for $H^0(\s{X}_{\phi}^{\star},\s{L}_{\lambda,\phi})$.
\end{thm}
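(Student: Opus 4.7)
The plan is to deduce both parts of the theorem by extracting graded pieces from the $M_I$-graded isomorphism provided by Theorem \ref{R}. For the first assertion, the degree-$\lambda$ component of $\s{R}_{\s{X}^{\star},M_I} = \bigoplus_{m} \s{O}_{\s{X}^{\star}}(W(m))\, z^m$ is precisely $\s{L}_\lambda \cdot z^\lambda$, so taking global sections of \eqref{Req} and restricting to $\deg = \lambda$ yields a $\kk$-linear isomorphism $H^0(\s{X}^{\star},\s{L}_\lambda) \risom \Gamma(\s{A}_{\prin}^{\star},\s{O}_{\s{A}_{\prin}^{\star}})_\lambda$. Under Assumption \ref{FullFG}, combined with the linear independence of $\{\vartheta_q\}_{q\in M_{\prin}}$ recorded at the start of this section, $\{\vartheta_q\}_{q \in \Xi}$ is a $\kk$-basis of $\Gamma(\s{A}_{\prin}^{\star},\s{O}_{\s{A}_{\prin}^{\star}})$. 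Since $\deg(\vartheta_q) = \deg(z^q)$, this basis is $M_I$-homogeneous, so its degree-$\lambda$ subset is exactly $\{\vartheta_q\}_{q \in \Xi \cap \deg^{-1}(\lambda)}$, proving the first claim.

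For the second assertion I would run the same argument on the isomorphism \eqref{Rphi}. The only new ingredient is to verify that the $M_I$-grading on $\s{O}_{\s{A}_{\prin}^{\star}}$ restricts to one on $\s{O}_{\s{A}_\phi^{\star}}$. This holds because $\s{A}_\phi^{\star}$ is cut out in $\s{A}_{\prin}^{\star}$ by the ideal generated by $z^{(0,k)} - z^{\kappa_1(k)}(\phi)$ for $k \in K_1$, and because $K_1 \subset \ker \?{p}_1$ these generators have $\deg = 0$, so the ideal is $M_I$-homogeneous. Thus \eqref{Rphi} identifies $H^0(\s{X}_\phi^{\star},\s{L}_{\lambda,\phi})$ with the degree-$\lambda$ part of $\Gamma(\s{A}_\phi^{\star},\s{O}_{\s{A}_\phi^{\star}})$.

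To close, I would invoke the fact, stated in the paragraph preceding the theorem, that $\Theta_{\Xi,\phi,s}$ is an additive basis for $\Gamma(\s{A}_\phi^{\star},\s{O}_{\s{A}_\phi^{\star}})$; this is itself a consequence of Assumption \ref{FullFG} together with the specialization formula \eqref{mn}, which says that $\vartheta_{(m,s(n_0))}|_{\s{A}_\phi^{\star}} = z^{s(n_0)}(\phi^{\text{lift}}) \cdot \vartheta_{(m,0)}|_{\s{A}_\phi^\star}$ and which shows that theta functions with the same image in $\?{p}_1(\Xi)$ become proportional on the fiber. Since by direct computation $\deg(\vartheta_{m,\lambda,\phi,s}) = \lambda$, extracting the degree-$\lambda$ piece of $\Theta_{\Xi,\phi,s}$ gives exactly $\{\vartheta_{m,\lambda,\phi,s}\}_{m \in \Xi_\lambda}$, completing the proof.

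The main obstacle, such as it is, is bookkeeping: tracking the $M_I$-grading through Theorem \ref{R} and verifying that it survives the passage to fibers over $\phi$. Once one notes $K_1 \subset \ker \?{p}_1$, everything is formal — all the substantive content is absorbed into Theorem \ref{R} (the universal torsor identification) and Assumption \ref{FullFG} (spanning by theta functions), so the argument is essentially a grade-by-grade unwinding of definitions.
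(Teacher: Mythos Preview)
Your proposal is correct and is precisely the approach the paper intends: the paper's own ``proof'' is merely the sentence ``Theorem \ref{R} now easily implies the following,'' and your write-up is exactly the graded-piece extraction that makes this implication explicit. Your check that the ideal cutting out $\s{A}_\phi^{\star}$ is $M_I$-homogeneous (via $K_1\subset\ker\?{p}_1$) is a reasonable bit of care, though strictly speaking the $M_I$-graded nature of \eqref{Rphi} is already implicit in Theorem \ref{R}.
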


In particular, by taking $\lambda=0$, we find that $\{\vartheta_{(m,n)}|(m,n)\in \Xi, m=\?{p}_1(n) \}$ is a $\kk$-module basis for $H^0(\s{X}^{\star},\s{O}_{\s{X}^{\star}})$.  Indeed, this agrees with the construction of theta functions on $\s{X}$ given in \cite[Construction 7.11]{GHKK}.

We note that Theorem \ref{SigmaThm} implies that Theorem \ref{ThetaBasis} analogously holds for $\s{X}^{\Sigma}$ and $\s{X}_{\phi}^{\Sigma}$ if one restricts to $\lambda \in M_{\Sigma}$.

\bibliographystyle{amsalpha}  % Here the bibliography 		     %
\bibliography{biblio}        % is inserted.			     %
\index{Bibliography@\emph{Bibliography}}%

\end{document}